\newtheorem{theorem}{Theorem}[section]
\newtheorem{corollary}[theorem]{Corollary}
\newtheorem{lemma}[theorem]{Lemma}
\newtheorem{Definition}[theorem]{Definition}
\newtheorem{Example}[theorem]{Example}
\newtheorem{Remark}[theorem]{Remark}
\numberwithin{equation}{section}
\newenvironment{remark}{\begin{Remark}\begin{em}}{\end{em}\end{Remark}}
\newenvironment{definition}{\begin{Definition}\begin{em}}{\end{em}\end{Definition}}
\def\bP{\mathbb{P}}
\def\cP{\mathcal{P}}
\def\bH{\mathbb{H}}
\def\<{\langle}
\def\>{\rangle}
\def\bM{\mathbb{M}}
\def\bR{\mathbb{R}}
\DeclareMathOperator{\tr}{tr}
\def\argmin{\mathop{\mathrm{arg\,min}}}
\begin{document}
\allowdisplaybreaks

\title[Geometric mean flows and the Cartan barycenter]
{Geometric mean flows and the Cartan barycenter on the Wasserstein space
over positive definite matrices}
\author[Hiai and Lim]{Fumio Hiai and Yongdo Lim}
\address{Tohoku University (Emeritus), Hakusan 3-8-16-303, Abiko 270-1154, Japan}\email{fumio.hiai@gmail.com}
\address{Department of Mathematics, Sungkyunkwan University, Suwon 440-746, Korea} \email{ylim@skku.edu}
\date{\today}
\maketitle

\begin{abstract} We introduce a class of flows on the Wasserstein space
of probability measures with finite first moment on the Cartan-Hadamard
Riemannian manifold of positive definite matrices, and consider the problem
of differentiability of the corresponding Cartan barycentric trajectory.
As a consequence we have a version of Lie-Trotter formula and a related
unitarily invariant norm inequality. Furthermore, a fixed point theorem
related to the Karcher equation and the Cartan barycentric trajectory is
also presented as an application.
\end{abstract}

\medskip
\noindent \textit{2010 Mathematics Subject Classification}. 15A42,
47A64, 47B65, 47L07

\noindent \textit{Key words and phrases.} Positive definite matrix,
Probability measure, Riemannian trace metric, Cartan barycenter,
Wasserstein distance, Lie-Trotter formula

\section{Introduction and main theorem}
Let $\bP_m$ be the set of $m\times m$ positive definite matrices,
which is a smooth Riemannian manifold with the \emph{Riemannian trace
metric} $ \<X,Y\>_A=\tr A^{-1}XA^{-1}Y,$ where $A\in {\Bbb P}_{m}$
and $X,Y\in {\Bbb H}_{m},$ the Euclidean space of $m\times m$
Hermitian matrices equipped with the inner product $\langle
X,Y\rangle={\mathrm{tr}}\,XY $. Then ${\Bbb P}_{m}$ is a
Cartan-Hadamard Riemannian manifold, a simply connected complete
Riemannian manifold with non-positive sectional curvature (the
canonical $2$-tensor is non-negative). The Riemannian distance
between $A,B\in\bP_m$ with respect to the above metric is given by $
d(A,B)=\|\log A^{-1/2}BA^{-1/2}\|_2$, where $\|X\|_2=(\tr
X^2)^{1/2}$ for $X\in\bH_m$, and the unique (up to parametrization)
geodesic joining $A$ and $B$ is given as the curve of
\emph{weighted geometric means}
\begin{equation}\label{w-g-mean}
t\in[0,1]\ \,\longmapsto\ \,A \#_{t} B :=
A^{\frac{1}{2}}(A^{-\frac{1}{2}}BA^{-\frac{1}{2}})^{t}A^{\frac{1}{2}}.
\end{equation}

\par
Let $\cP(\bP_m)$ denote the set of all probability measures on the
Borel sets of $\bP_m$, and $\cP^1(\bP_m)$ be the set of
$\mu\in\cP(\bP_m)$ with finite first moment, i.e., for some
(equivalently for all) $Y\in\bP_m$,
$\int_{\bP_m}d(X,Y)\,d\mu(X)<\infty.$ For $\mu\in\cP^1(\bP_m),$ the
{\it Cartan barycenter} $G(\mu)\in\bP_m$ is uniquely defined as
$$
G(\mu)=\argmin_{Z\in\bP_m}\int_{\bP_m}\bigl[d^2(Z,X)-d^2(Y,X)\bigr]\,d\mu(X)
$$
independently of the choice of a fixed $Y\in\bP_m$ (see \cite{St}).
For every $\mu\in\cP^1(\bP_m)$, $X=G(\mu)$ is characterized by the
{\it Karcher equation}
\begin{equation}\label{Karcher}
\int_{\bP_m}\log X^{-1/2}AX^{-1/2}\,d\mu(A)=0,
\end{equation}
which is equivalent to the gradient zero equation for the function
$Z\mapsto\int_{\bP_m}\bigl[d^2(Z,X)-d^2(Y,X)\bigr]\,d\mu(X)$ on
$\bP_m$. See \cite[Theorem 3.1]{HL}.

\par
When $A_1,\dots,A_n\in\bP_m$ and $w=(w_1,\dots,w_n)$ is a weight
vector (i.e., $w_j\ge0$, $\sum_{j=1}^nw_j=1$), we denote by
$G_w(A_1,\dots,A_n)$ the Cartan mean $G(\mu)$ of a finitely
supported measure $\mu=\sum_{j=1}^{n}w_j\delta_{A_{j}}$, where
$\delta_A$ is the point measure of mass $1$ (or the Dirac mass) at
$A \in \bP_m$. In particular, $G_w(A,B)$ with $w=(1-t,t)$ for $0\le t\le1$
coincides with the weighted geometric mean in \eqref{w-g-mean}.
For $n > 2$ we have no such formula, and properties of $G_w(A_{1},
\dots, A_{n})$ have to be established by indirect arguments. The
multivariate mean $G_w(A_1,\dots,A_n)$ has been the subject of
intensive study in the past ten years, e.g., \cite{Mo,BH,LL1,BK,LP,Ya1}.

\par
We now introduce a class of flows induced by the weighted geometric mean map
on the probability measure space ${\mathcal P}^1({\Bbb P}_{m})$.

\begin{definition}\label{D:flow}\rm
For $X\in\bP_m$,
$\mu\in{\mathcal P}^{1}(\bP_m)$ and $t\in\bR$, define
$X\#_{t}\mu\in {\mathcal P}^{1}(\bP_m)$ by
\begin{equation}\label{flow}
X\#_{t}\mu:=(f_t)_{*}\mu,
\end{equation}
i.e., the push-forward of $\mu$ by the
homeomorphic map $f_t:\bP_{m}\to \bP_{m}$ defined by
$f_t(A):=X\#_{t}A$, where we use the notation $\#_t$ given in
\eqref{w-g-mean} without restricting to $0\le t\le1$ (indeed,
the expression in \eqref{w-g-mean} is meaningful for all $t\in\bR$).
We also define the Cartan barycentric trajectory of \eqref{flow} by
\begin{eqnarray}\label{beta}
\beta(t)=\beta_X^\mu(t):=G(X\#_{t}\mu),\qquad t\in\bR.
\end{eqnarray}
\end{definition}

The one-parameter family $X\#_t\mu$ provides a flow on $\cP^1(\bP_m)$
and is also considered as a $\bP_m$-valued Markov process (see
Theorem \ref{T:flow} and Remark \ref{R:flow} for more details).

\par
The main result of the paper is the following:

\begin{theorem}\label{T:main}
Let $X\in {\Bbb P}_{m}$ and $\mu\in {\mathcal P}^{1}({\Bbb P}_{m})$.
Then the map $\beta:\bR\to {\Bbb P}_{m}$ defined by \eqref{beta} is locally
Lipschitz continuous on $\bR$ and differentiable at $t=0$ with
\begin{eqnarray}\label{X-int}
\beta'(0)=X^{1/2}\biggl(\int_{\bP_{m}}\log
X^{-1/2}AX^{-1/2}\,d\mu(A)\biggr)X^{1/2}.
\end{eqnarray}
\end{theorem}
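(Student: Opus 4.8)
The plan is to fix the base point, reduce to the case $X=I$ by an equivariance of the barycenter, establish local Lipschitz continuity by a Wasserstein contraction estimate, and finally extract differentiability at $0$ from a stability (strong convexity) estimate for the Karcher equation. To begin, note that $f_0(A)=X\#_0A=X$ for every $A$, so $X\#_0\mu=\delta_X$ and hence $\beta(0)=G(\delta_X)=X$, consistent with \eqref{X-int} at $t=0$. Since the trace metric satisfies $d(\Gamma A\Gamma^*,\Gamma B\Gamma^*)=d(A,B)$ for any $\Gamma\in GL_m$ (immediate from $d(A,B)=\|\log A^{-1/2}BA^{-1/2}\|_2$), the congruence $A\mapsto\Gamma A\Gamma^*$ is an isometry, so $G$ is equivariant: $G(\Gamma_*\nu)=\Gamma\,G(\nu)\,\Gamma^*$. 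Taking $\Gamma=X^{-1/2}$ and $\nu:=(X^{-1/2}(\cdot)X^{-1/2})_*\mu$, and using $X^{-1/2}(X\#_tA)X^{-1/2}=(X^{-1/2}AX^{-1/2})^t$, one gets $X^{-1/2}\beta(t)X^{-1/2}=G((g_t)_*\nu)$ where $g_t(M):=M^t$. As conjugation by $X^{1/2}$ is a fixed linear isomorphism of $\bH_m$, it suffices to treat $X=I$ and prove that $t\mapsto G((g_t)_*\mu)$ is locally Lipschitz and differentiable at $0$ with derivative $L:=\int_{\bP_m}\log A\,d\mu(A)$; conjugating back by $X^{1/2}$ then yields \eqref{X-int}.

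For local Lipschitz continuity I would invoke Sturm's contraction property \cite{St}, $d(G(\rho),G(\sigma))\le W_1(\rho,\sigma)$, where $W_1$ is the $1$-Wasserstein distance. Using the deterministic coupling $A\mapsto(A^s,A^t)$ of $(g_s)_*\mu$ and $(g_t)_*\mu$, together with the constant-speed geodesic identity $d(A^s,A^t)=|t-s|\,d(I,A)$, I obtain
\[
W_1\bigl((g_s)_*\mu,(g_t)_*\mu\bigr)\le\int_{\bP_m}d(A^s,A^t)\,d\mu(A)=|t-s|\int_{\bP_m}d(I,A)\,d\mu(A),
\]
which is finite since $\mu\in\cP^1(\bP_m)$. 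Hence $\beta$ is globally $d$-Lipschitz; because $\bP_m$ is a Hadamard manifold, closed $d$-balls are compact and $d$ is bi-Lipschitz to $\|\cdot\|_2$ on them, so $\beta$ is locally Lipschitz as a map into $\bH_m$.

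For differentiability at $0$ I would use the strong convexity of the barycenter functional. With $X=I$ the minimizer of $F_t(Z):=\int_{\bP_m}\bigl[d^2(Z,A^t)-d^2(Y,A^t)\bigr]\,d\mu(A)$ is $\beta(t)=G((g_t)_*\mu)$, and $F_t$ is $2$-strongly geodesically convex, so $d(\beta(t),Z)\le c\,\|\mathrm{grad}\,F_t(Z)\|_Z$ for all $Z$ and a fixed constant $c$, where $\mathrm{grad}\,F_t(Z)=-2Z^{1/2}\bigl(\int_{\bP_m}\log Z^{-1/2}A^tZ^{-1/2}\,d\mu(A)\bigr)Z^{1/2}$ by \eqref{Karcher} and \cite[Theorem 3.1]{HL}. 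Choosing the candidate $Z_t:=I+tL$, the problem reduces to showing that the Karcher operator evaluated there is $o(t)$, namely $\Psi(t):=\int_{\bP_m}\log Z_t^{-1/2}A^tZ_t^{-1/2}\,d\mu(A)=o(t)$; granting this, $d(\beta(t),I+tL)=o(t)$, which in $\|\cdot\|_2$ near $I$ gives $\beta'(0)=L$. To analyze $\Psi$ I would expand $A^t=I+t\log A+\rho(t,A)$ with $\rho(t,A)=A^t-I-t\log A$, together with $Z_t^{-1/2}=I-\tfrac t2 L+O(t^2)$ and $\log(I+\cdot)$, and observe that the first-order terms combine to $t\int(\log A-L)\,d\mu(A)=0$, leaving only remainders.

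The main obstacle is precisely the control of these remainders under the mere first-moment hypothesis. Dividing by $t$, the delicate piece is $\tfrac1t\int_{\bP_m}\rho(t,A)\,d\mu(A)$; here $\tfrac1t\rho(t,A)\to0$ pointwise, but the natural bound $\|\rho(t,A)\|_2\lesssim t^2\|\log A\|_2\,\|\log A\|_\infty\,e^{\|\log A\|_\infty}$ is not dominated by a $\mu$-integrable function when only $\int d(I,A)\,d\mu(A)<\infty$ is assumed, so dominated convergence does not apply directly. I expect to resolve this by truncation: restrict $\mu$ to $\{A:d(I,A)\le R\}$, where $\log A$ is bounded and all expansions are legitimate, yielding $\beta_R(t)=I+tL_R+o_R(t)$ for the truncated problem; then control $\beta-\beta_R$ through the contraction $d(\beta(t),\beta_R(t))\le W_1((g_t)_*\mu,(g_t)_*\mu_R)$ and let $R\to\infty$, the tail $\int_{d(I,A)>R}d(I,A)\,d\mu(A)\to0$ furnishing exactly the uniformity in $R$ needed to make the $o(t)$ estimate survive the limit. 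Combining the truncated differentiability with this uniform tail control gives $\beta'(0)=L$, and conjugating by $X^{1/2}$ produces \eqref{X-int}.
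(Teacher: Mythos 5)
Your proposal is correct in outline but reaches the derivative by a genuinely different route from the paper's. The reduction to $X=I$ via congruence equivariance is exactly the paper's Lemma \ref{L-3.3}, and your Lipschitz estimate via the deterministic coupling $A\mapsto(A^s,A^t)$ is a cleaner, one-line version of what the paper does in Lemma \ref{L:2.3} and Theorem \ref{T:flow}. The divergence is in the differentiability step. The paper uses neither strong convexity nor truncation: it sets $H_t:=X_t^{-1/2}-I$, notes that $H_t/t$ is bounded (so it has limit points), and identifies every limit point by passing to the limit in the Karcher equation \eqref{Karcher-flow2}. The integrability obstacle you rightly isolate is dispatched there by a single observation (Lemma \ref{L:3.2}): because the logarithm is taken \emph{before} integrating, one has $\frac{1}{|t|}\|\log X_t^{-1/2}A^tX_t^{-1/2}\|\le C+\|\log A\|$, a bound linear in $\|\log A\|$ and hence $\mu$-integrable under the first-moment hypothesis alone, so dominated convergence applies directly to the Karcher integrand with no truncation. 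Your alternative --- the strong-convexity stability bound $d(\beta(t),Z)\le\bigl\|\int\log Z^{-1/2}A^tZ^{-1/2}\,d\mu(A)\bigr\|_2$ tested at $Z_t=I+tL_R$ for the truncated measure, plus a tail comparison --- does work, and it buys a quantitative rate in terms of $\varepsilon_R:=\int_{d(I,A)>R}d(I,A)\,d\mu(A)$ in place of a soft subsequence argument; on the other hand it imports an extra ingredient (uniform geodesic convexity of the barycenter functional and the gradient formula of \cite[Theorem 3.1]{HL}) that the paper's proof avoids. To close your argument you must make explicit the one estimate your sketch leaves implicit: the chain
$d(\beta(t),\beta_R(t))\le d_1^W\bigl((g_t)_*\mu,(g_t)_*\mu_R\bigr)\le|t|\,d_1^W(\mu,\mu_R)\le 2|t|\,\varepsilon_R$,
where the factor $|t|$ comes from the contraction $d(A^t,B^t)\le|t|\,d(A,B)$ for $|t|\le1$ (inequality \eqref{E:3.1} with $A=C=I$, extended to negative $t$ since inversion is an isometry) together with Lemma \ref{L:2.2}. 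Without that factor the discrepancy between $\beta$ and $\beta_R$ is only $o_R(1)$, which does not survive division by $t$; with it, $\limsup_{t\to0}\|\beta(t)-I-tL\|/|t|\le c\,\varepsilon_R+\|L_R-L\|\to0$ as $R\to\infty$, and the argument closes.
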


The proof of the theorem will be presented in Section 3. The theorem
has an important consequence on the Lie-Trotter formula for the Cartan
barycenter, as shown in the rest of this introductory section.

\par
For general square matrices $X$ and $Y,$ the well-known {\it
Lie-Trotter formula} expresses
$$\lim_{n\to\infty}(e^{X/n}e^{Y/n})^n=e^{X+Y}.$$ The symmetric form
with a continuous parameter is also well-known as
$$\lim_{t\to0}(A^{t/2}B^tA^{t/2})^{1/t}=\exp(\log A+\log B)$$ for
$A,B\in\bP_m$. This formula has also been known in many other
situations; for example, see \cite{HiP0,Hi,Fu,AKL,BG} for
$A\#_\alpha B$ and other means. The Lie-Trotter formula for the
Cartan mean $G_w(A_{1},\dots,A_{n})$ of a finite number of
$A_j\in\bP_m$ (or a finitely supported measure
$\mu=\sum_{j=1}^nw_j\delta_{A_j}$) is
\begin{equation}\label{Lie-Trotter1}
{\underset{t\to 0}{\lim}}\, G_w(
A_1^t,\ldots,A_m^t)^{\frac{1}{t}}=\exp\left( \sum_{j=1}^{n}w_j\log
A_{j}\right),
\end{equation}
as given in \cite{FFS,HiP}. In \cite{HL}, the authors have extended
this Lie-Trotter formula for a certain sub-class of $\cP^1(\bP_m)$
in such a way that
\begin{equation}\label{Lie-Trotter2}
\lim_{t\to0}G(\mu^t)^{\frac{1}{t}}=\exp\int_{\bP_m}\log A\,d\mu(A)
\end{equation}
for any $\mu\in\cP(\bP_m)$ satisfying
$\int_{\bP_m}(\|A\|+\|A^{-1}\|)^r\,d\mu(A)<\infty$ for some $r>0$.
Here, $\|A\|$ denotes the operator norm of $A$, while any two norms on
$\bH_m$ are equivalent due to finite dimensionality.

The action of $t$-th power $\mu^t$ on $\cP(\bP_m)$ is defined by the
push-forward measure of $\mu$ by the matrix $t$-th power  $A\mapsto
A^t$ on $\bP_m$, that is,
\begin{eqnarray}\label{E:pow}
\mu^t({\mathcal O})=\mu\bigl(\bigl\{A^{\frac{1}{t}}: A\in {\mathcal
O}\bigr\}\bigr)
\end{eqnarray}
for any Borel set $\mathcal O\subset\bP_m$, which is indeed
comparable to the case in \eqref{Lie-Trotter1} since
$\mu^{t}=\sum_{j=1}^{n}w_j\delta_{A_{j}^{t}}$ for
$\mu=\sum_{j=1}^{n}w_j\delta_{A_{j}}$. When $X$ is the identity
matrix $I=I_m$, we have $\beta(t)=G(\mu^t)$ with $\beta(0)=I$.
Theorem \ref{T:main} implies that $ \beta(t)=I+t\beta'(0)+o(t)$ so
that
$$
{1\over t}\log\beta(t)=\beta'(0)+{o(t)\over t} \ \,\longrightarrow\
\,\beta'(0)\qquad as\ \ t\to0.
$$
Therefore,
$$
\lim_{t\to0}G(\mu^t)^{\frac{1}{t}}=\lim_{t\to0}\beta(t)^{\frac{1}{t}}=\exp\beta'(0)
=\exp\int_{\bP_n}\log A\,d\mu(A).
$$
This provides the following extension of the above Lie-Trotter formula
to the most general case of $\mu\in\cP^1(\bP_m)$.

\begin{corollary}\label{C:LT}
The formula \eqref{Lie-Trotter2} holds true for every $\mu\in\cP^1(\bP_m)$.
\end{corollary}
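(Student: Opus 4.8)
The plan is to obtain the corollary as a direct specialization of Theorem \ref{T:main} to the base point $X = I$. First I would record the elementary identity
$$
I \#_t A = I^{1/2}(I^{-1/2}AI^{-1/2})^t I^{1/2} = A^t, \qquad A \in \bP_m,\ t \in \bR,
$$
so that the homeomorphism $f_t$ of Definition \ref{D:flow} is exactly the matrix power map $A \mapsto A^t$. Consequently the push-forward $I \#_t \mu = (f_t)_*\mu$ coincides with the $t$-th power measure $\mu^t$ from \eqref{E:pow}, and therefore the trajectory specializes to $\beta(t) = \beta_I^\mu(t) = G(I \#_t \mu) = G(\mu^t)$. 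At $t = 0$ the map $f_0$ collapses every $A$ to $I$, whence $I \#_0 \mu = \delta_I$ and $\beta(0) = G(\delta_I) = I$.

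Next I would evaluate the derivative. By Theorem \ref{T:main} the map $\beta$ is differentiable at $t = 0$, and inserting $X = I$ into the formula \eqref{X-int} yields
$$
\beta'(0) = I^{1/2}\biggl(\int_{\bP_m} \log I^{-1/2} A I^{-1/2}\,d\mu(A)\biggr) I^{1/2} = \int_{\bP_m} \log A\,d\mu(A),
$$
which is precisely the Hermitian matrix exponentiated on the right-hand side of \eqref{Lie-Trotter2}.

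It then remains to pass from the first-order expansion $\beta(t) = I + t\beta'(0) + o(t)$ to the claimed limit. Since $\beta(t) \in \bP_m$ for all $t$ and $\beta(t) \to I = \beta(0)$ as $t \to 0$, I would invoke the smoothness of the matrix logarithm on $\bP_m$, whose expansion at the identity reads $\log(I + H) = H + O(\|H\|^2)$ as $H \to 0$. Combined with the expansion of $\beta$ this gives $\log\beta(t) = t\beta'(0) + o(t)$, so that
$$
\frac{1}{t}\log\beta(t) = \beta'(0) + \frac{o(t)}{t} \longrightarrow \beta'(0) \qquad (t \to 0),
$$
and applying the continuous exponential map produces
$$
\lim_{t \to 0} G(\mu^t)^{1/t} = \lim_{t \to 0} \exp\Bigl(\tfrac{1}{t}\log\beta(t)\Bigr) = \exp\beta'(0) = \exp\int_{\bP_m} \log A\,d\mu(A).
$$
I do not expect a serious obstacle here, since the substantive analytic content—local Lipschitz continuity and differentiability of $\beta$ at $0$, together with the explicit value of $\beta'(0)$—is exactly what Theorem \ref{T:main} supplies; the only point needing a little care is the linearization of the logarithm, which is routine given that $\beta$ remains in $\bP_m$ and converges to $I$.
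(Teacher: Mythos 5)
Your proposal is correct and follows essentially the same route as the paper: specialize Theorem \ref{T:main} to $X=I$ so that $\beta(t)=G(\mu^t)$ with $\beta(0)=I$ and $\beta'(0)=\int_{\bP_m}\log A\,d\mu(A)$, then pass the first-order expansion through the logarithm to get $\frac{1}{t}\log\beta(t)\to\beta'(0)$ and exponentiate. The only difference is that you make explicit the linearization $\log(I+H)=H+O(\|H\|^2)$, which the paper leaves implicit.
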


It turns out \cite[Corollary 4.5]{HL} that
$\big|\big|\big|G(\mu^t)^{1\over t}\big|\big|\big|$ is increasing as
$t\searrow0$ for any unitarily invariant norm $|||\cdot|||$. As a
byproduct of Corollary \ref{C:LT} we have:

\begin{corollary}\label{C:norm}
Let $\mu\in\cP^1(\bP_m)$. Then for every unitarily invariant norm
$|||\cdot|||$ and for every $t>0$,
\begin{equation*}
\big|\big|\big|G(\mu^{-t})^{-{1\over t}}\big|\big|\big|
=\big|\big|\big|G(\mu^t)^{1\over t}\big|\big|\big|
\le\bigg|\bigg|\bigg|\exp\int_{\bP_m}\log
X\,d\mu(X)\bigg|\bigg|\bigg|,
\end{equation*}
and $\big|\big|\big|G(\mu^t)^{1\over t}\big|\big|\big|$ increases to
$\big|\big|\big|\exp\int_{\bP_m}\log X\,d\mu(X)\big|\big|\big|$ as
$t\searrow0$.
\end{corollary}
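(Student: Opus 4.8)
The plan is to deduce the corollary from the limit formula of Corollary~\ref{C:LT} together with the monotonicity recorded in \cite[Corollary~4.5]{HL}, after first establishing an inversion symmetry that disposes of the equality $|||G(\mu^{-t})^{-1/t}|||=|||G(\mu^t)^{1/t}|||$. The key preliminary step is the self-duality of the Cartan barycenter under inversion: writing $\iota(A)=A^{-1}$ and $\nu=\iota_*\mu$ for the push-forward (and noting that $\iota$ is an isometry of $\bP_m$, so $\nu\in\cP^1(\bP_m)$), I claim $G(\nu)=G(\mu)^{-1}$. This follows at once from the Karcher equation~\eqref{Karcher}: with $Y=G(\mu)$ we have $\int_{\bP_m}\log Y^{-1/2}AY^{-1/2}\,d\mu(A)=0$, and changing variables under the push-forward together with the identity $Y^{1/2}A^{-1}Y^{1/2}=(Y^{-1/2}AY^{-1/2})^{-1}$ (so that $\log Y^{1/2}A^{-1}Y^{1/2}=-\log Y^{-1/2}AY^{-1/2}$) gives $\int_{\bP_m}\log (Y^{-1})^{-1/2}B(Y^{-1})^{-1/2}\,d\nu(B)=0$; by the uniqueness characterization of the barycenter this forces $G(\nu)=Y^{-1}$.

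I would then specialize this identity. Since $A\mapsto A^{-t}$ is the $t$-th power map composed with $\iota$, the measure $\mu^{-t}$ of~\eqref{E:pow} is exactly $\iota_*(\mu^t)$, so the self-duality yields $G(\mu^{-t})=G(\mu^t)^{-1}$ and hence the matrix identity $G(\mu^{-t})^{-1/t}=\bigl(G(\mu^t)^{-1}\bigr)^{-1/t}=G(\mu^t)^{1/t}$. This proves the first equality of the corollary at the level of matrices, in particular for every unitarily invariant norm, and reduces the problem to the behavior of $|||G(\mu^t)^{1/t}|||$ for $t>0$.

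To finish I would combine two facts. By Corollary~\ref{C:LT}, $G(\mu^t)^{1/t}\to\exp\int_{\bP_m}\log X\,d\mu(X)$ as $t\to0$, and since every unitarily invariant norm on $\bH_m$ is continuous this gives $\lim_{t\searrow0}|||G(\mu^t)^{1/t}|||=|||\exp\int_{\bP_m}\log X\,d\mu(X)|||$. On the other hand, \cite[Corollary~4.5]{HL} asserts that $|||G(\mu^t)^{1/t}|||$ increases as $t\searrow0$; a quantity that increases toward $t=0$ is bounded above by its limit there and converges upward to it, which yields simultaneously the inequality $|||G(\mu^t)^{1/t}|||\le|||\exp\int_{\bP_m}\log X\,d\mu(X)|||$ for every $t>0$ and the asserted monotone convergence. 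The only substantive step is the inversion symmetry; once it is granted, the corollary is an immediate consequence of the continuity of the norm applied to the limit already furnished by Corollary~\ref{C:LT}.
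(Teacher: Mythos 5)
Your proposal is correct and follows essentially the route the paper intends: the paper gives no explicit proof, presenting the corollary as an immediate byproduct of combining the monotonicity of $\big|\big|\big|G(\mu^t)^{1/t}\big|\big|\big|$ as $t\searrow0$ from \cite[Corollary 4.5]{HL} with the limit furnished by Corollary \ref{C:LT}, exactly as you do. Your additional verification of the equality $\big|\big|\big|G(\mu^{-t})^{-1/t}\big|\big|\big|=\big|\big|\big|G(\mu^t)^{1/t}\big|\big|\big|$ via the inversion symmetry $G(\iota_*\mu)=G(\mu)^{-1}$ of the Karcher equation is sound and makes explicit a step the paper leaves to the cited reference.
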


\section{Geometric mean flows on the probability measure space}

Let $X\in\bP_m$ and $\mu \in{\mathcal P}^{1}(\bP_m)$. For every $t\in\bR$,
define $X\#_t\mu$ as in Definition \ref{D:flow}, that is, $X\#_{t}\mu=(f_t)_{*}\mu$
is the push-forward of $\mu$ by $f_t:\bP_m\to\bP_m$, $f_t(A)=X\#_{t}A$.

\begin{lemma}\label{L:2.1}
We have $X\#_t\mu\in\cP^1(\bP_m)$ for every $t\in\bR$.
\end{lemma}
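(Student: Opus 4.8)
The plan is to verify the two requirements for membership in $\cP^1(\bP_m)$: that $X\#_t\mu$ is a probability measure on the Borel sets of $\bP_m$, and that it has finite first moment. The first is immediate, since $f_t$ is a homeomorphism, hence Borel measurable, and the push-forward of a probability measure under a Borel map is again a probability measure. The real content of the lemma is therefore the finite-first-moment condition.

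To check finiteness of the first moment, I would fix a reference point and apply the change-of-variables formula for push-forward measures: for any $Y\in\bP_m$,
\[
\int_{\bP_m} d(B,Y)\,d(X\#_t\mu)(B)=\int_{\bP_m} d(f_t(A),Y)\,d\mu(A)=\int_{\bP_m} d(X\#_t A,Y)\,d\mu(A).
\]
The natural choice is $Y=X$, for which the integrand simplifies drastically. From the definition of $\#_t$ in \eqref{w-g-mean} one computes
\[
X^{-1/2}(X\#_t A)X^{-1/2}=(X^{-1/2}AX^{-1/2})^t,
\]
so that, using $d(A,B)=\|\log A^{-1/2}BA^{-1/2}\|_2$,
\[
d(X\#_t A,X)=\bigl\|\log (X^{-1/2}AX^{-1/2})^t\bigr\|_2=|t|\,\bigl\|\log X^{-1/2}AX^{-1/2}\bigr\|_2=|t|\,d(A,X).
\]
Geometrically this simply records that $s\mapsto X\#_s A$ is the geodesic through $X$ and $A$ parametrized proportionally to arc length, so the distance from $X$ to $X\#_t A$ scales exactly by $|t|$.

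Combining these, I would conclude
\[
\int_{\bP_m} d(B,X)\,d(X\#_t\mu)(B)=|t|\int_{\bP_m} d(A,X)\,d\mu(A)<\infty,
\]
the finiteness coming directly from $\mu\in\cP^1(\bP_m)$. Hence $X\#_t\mu\in\cP^1(\bP_m)$ for every $t\in\bR$. I do not expect a genuine obstacle here: the only point requiring care is the identity $X^{-1/2}(X\#_t A)X^{-1/2}=(X^{-1/2}AX^{-1/2})^t$, which turns the distance into an exact $|t|$-scaling and bypasses any need for curvature or convexity estimates; everything else is the standard change-of-variables formula together with the definition of $\cP^1(\bP_m)$.
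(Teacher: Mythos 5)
Your proof is correct, and it takes a genuinely different (and cleaner) route than the paper's. The paper measures the first moment from the base point $I$: it bounds $\|X\#_tA\|$ and $\|(X\#_tA)^{-1}\|$ by operator-norm submultiplicativity, combines these via the identity $\|\log A\|=\log\max\{\|A\|,\|A^{-1}\|\}$ into the estimate $\|\log(X\#_tA)\|\le(1+t)\|\log X\|+t\|\log A\|$, and then invokes the equivalence of norms on $\bH_m$ to control $d(X\#_tA,I)$. You instead measure from the base point $X$, where the congruence identity $X^{-1/2}(X\#_tA)X^{-1/2}=(X^{-1/2}AX^{-1/2})^t$ turns the distance into the exact scaling $d(X\#_tA,X)=|t|\,d(A,X)$, so the first moment of $X\#_t\mu$ about $X$ equals $|t|$ times that of $\mu$ about $X$ --- no inequalities or norm comparisons needed, and the cases $t>0$, $t<0$, $t=0$ are handled uniformly (the paper treats $t<0$ by a separate remark). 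The only cosmetic caveat is that $f_0$ is constant rather than a homeomorphism, but the push-forward is then $\delta_X\in\cP^1(\bP_m)$, so nothing is lost. Your argument buys exactness and brevity; the paper's buys an explicit quantitative bound on $\|\log(X\#_tA)\|$ of the sort that is reused in spirit in its later estimates (e.g.\ Lemma \ref{L:3.2}), but for the statement of Lemma \ref{L:2.1} itself your approach is entirely sufficient.
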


\begin{proof}
It is immediate to see that
\begin{equation}\label{E:2.1}
\|\log A\|=\log\max\{\|A\|,\|A^{-1}\|\},\qquad A\in\bP_m.
\end{equation}
When $t>0$, we have
$$
\|X\#_tA\|\le\|X\|\,\|X^{-1/2}AX^{-1/2}\|^t\le\|X\|\,\|X^{-1}\|^t\|A\|^t,
$$
$$
\|(X\#_tA)^{-1}\|=\|X^{-1/2}(X^{1/2}A^{-1}X^{1/2})^tX^{-1/2}\|
\le\|X^{-1}\|\,\|X\|^t\|A^{-1}\|^t.
$$
Therefore, by \eqref{E:2.1} we have
$$
\|\log(X\#_tA)\|\le(1+t)\|\log X\|+t\|\log A\|,\qquad A\in\bP_m,
$$
which implies that $X\#_t\mu\in\cP^1(\bP_m)$ since $d(X,I)=\|\log
X\|_{2}\leq m\|\log X\|$ for all $X\in {\Bbb P}_{m}$ and
$$
\int_{\bP_m}\|\log A\|\,d(X\#_t\mu)(A)
=\int_{\bP_m}\|\log(X\#_tA)\|\,d\mu(A)<\infty.
$$
When $t<0$, the argument is similar since
$X\#_tA=X^{1/2}(X^{1/2}A^{-1}A^{1/2})^{-t}X^{1/2}$.
\end{proof}

Note that $I\#_{t}\mu=\mu^{t}$, where $\mu^t$ is defined in \eqref{E:pow},
$ X\#_{0}\mu=\delta_{X}$ and $X\#_{1}\mu=\mu.$ When $t\ne0$, since
$X\#_{t}Z=A$ if and only if $Z=X\#_{1/t}A$, i.e., $f_t^{-1}=f_{1/t}$, we see that
\begin{displaymath}
\displaystyle (X\#_{t}\mu)(\mathcal{O}) = \mu( \{ X\#_{1/t}A:A\in \mathcal{O} \})
\end{displaymath}
for any Borel set $\mathcal O\subset\bP_m$. Moreover, note that if
$\mu=\frac{1}{n}\sum_{j=1}^{n}\delta_{A_{j}}$, then
$X\#_{t}\mu=\frac{1}{n}\sum_{j=1}^{n}\delta_{X\#_{t}A_{j}}$.

The \emph{$1$-Wasserstein distance} $d_1^W$ on $\cP^1(\bP_m)$ is defined by
$$
d_1^{W}(\mu,\nu):=\inf_{\pi\in\Pi(\mu,\nu)}\int_{\bP_m\times\bP_m}d(X,Y)\,d\pi(X,Y),
\qquad\mu,\nu\in\cP^1(\bP_m),
$$
where $\Pi(\mu,\nu)$ is the set of all couplings for $\mu,\nu$,
i.e., $\pi\in\cP(\bP_m\times\bP_m)$ whose marginals are $\mu$ and $\nu$.
Recall (see \cite{St}) that $\cP^1(\bP_m)$ is a complete metric space with
the metric $d_1^W$ and that the set $\cP_0(\bP_m)$ of uniform probability measures
with finite support (i.e., the measures of the form ${1\over n}\sum_{j=1}^n\delta_{A_j}$)
is dense in $\cP^1(\bP_m)$. An important fact called the \emph{fundamental contraction
property} in \cite{St} (also \cite[Theorem 2.3]{HL}) is that the
Cartan barycenter $G:\cP^1(\bP_m)\to {\bP_m}$ is a Lipschitz map
with Lipschitz constant $1$; namely, for every
$\mu,\nu\in\cP^1(\bP_m)$,
\begin{eqnarray}\label{contract}
d(G(\mu),G(\nu))\leq d_{1}^{W}(\mu,\nu).
\end{eqnarray}

\par
The next lemma will play a role, which was given in \cite[Lemma 2.2]{LL5} in
a more general setting.

\begin{lemma}\label{L:2.2}
Let $f:\bP_m\to\bP_m$ be a Lipschitz map with Lipschitz constant $C$. Then
the push-forward map $f_*:\cP^1(\bP_m)\to\cP^1(\bP_m)$, $\mu\mapsto f_*\mu$,
is Lipschitzian with respect to $d_1^W$ with Lipschitz constant $C$.
\end{lemma}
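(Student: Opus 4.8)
The plan is to use the coupling definition of $d_1^W$ and to transport couplings for $(\mu,\nu)$ into couplings for $(f_*\mu,f_*\nu)$ through the product map $f\times f$. Before that I would record the preliminary point that $f_*$ really maps $\cP^1(\bP_m)$ into itself. Fix a reference point $Y_0\in\bP_m$; the Lipschitz bound gives $d(f(X),f(Y_0))\le C\,d(X,Y_0)$, so by the triangle inequality $d(f(X),Y_0)\le C\,d(X,Y_0)+d(f(Y_0),Y_0)$. Integrating against $\mu$ and using the change-of-variables formula for push-forwards, $\int_{\bP_m}d(Z,Y_0)\,d(f_*\mu)(Z)=\int_{\bP_m}d(f(X),Y_0)\,d\mu(X)<\infty$, since $\mu$ has finite first moment. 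Hence $f_*\mu\in\cP^1(\bP_m)$.

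The heart of the argument is the coupling transport. Let $\pi\in\Pi(\mu,\nu)$ be arbitrary and set $F:=f\times f:\bP_m\times\bP_m\to\bP_m\times\bP_m$, $F(X,Y)=(f(X),f(Y))$. I would verify that $F_*\pi\in\Pi(f_*\mu,f_*\nu)$: writing $p_1,p_2$ for the coordinate projections, one has $p_1\circ F=f\circ p_1$, so $(p_1)_*(F_*\pi)=(f\circ p_1)_*\pi=f_*\bigl((p_1)_*\pi\bigr)=f_*\mu$, and likewise $(p_2)_*(F_*\pi)=f_*\nu$. Thus $F_*\pi$ is an admissible coupling for $(f_*\mu,f_*\nu)$.

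Using $F_*\pi$ as a competitor in the infimum defining $d_1^W(f_*\mu,f_*\nu)$, together with the change-of-variables formula and the Lipschitz property of $f$, I would then estimate
\[
d_1^W(f_*\mu,f_*\nu)\le\int d(u,v)\,d(F_*\pi)(u,v)
=\int d(f(X),f(Y))\,d\pi(X,Y)\le C\int d(X,Y)\,d\pi(X,Y).
\]
Taking the infimum over all $\pi\in\Pi(\mu,\nu)$ yields $d_1^W(f_*\mu,f_*\nu)\le C\,d_1^W(\mu,\nu)$, as claimed. I do not anticipate a genuine obstacle: $f$ is continuous, hence Borel, so all the push-forwards and the change-of-variables identities are legitimate; the only step I would write out with some care is the marginal computation showing $F_*\pi\in\Pi(f_*\mu,f_*\nu)$, which is precisely what makes the transported coupling admissible.
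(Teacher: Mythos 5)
Your argument is correct and complete: the push-forward of a coupling by $f\times f$ is indeed a coupling of the push-forward measures (your marginal computation $(p_1)_*(F_*\pi)=f_*\mu$ is the right thing to check), and the Lipschitz bound then passes through the change-of-variables formula to give $d_1^W(f_*\mu,f_*\nu)\le C\,d_1^W(\mu,\nu)$; the preliminary verification that $f_*\mu$ has finite first moment is also handled properly. Note that the paper itself offers no proof of this lemma, citing instead \cite[Lemma 2.2]{LL5} where it is established in a more general setting, so your self-contained coupling-transport argument is the standard one and fills in exactly what the paper outsources.
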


\begin{lemma}\label{L:2.3}
For every $\mu,\nu\in {\mathcal P}^{1}({\Bbb P}_{m})$ and $t,s\in
[0,1],$
$$
d_{1}^{W}(X\#_{t}\mu,Y\#_{s}\nu)\leq (1-t)d(X,Y)+td_{1}^{W}(\mu,\nu)
+|t-s| d_{1}^{W}(\delta_{Y}, \nu).
$$
\end{lemma}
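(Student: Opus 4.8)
The plan is to bound the Wasserstein distance between the two push-forward measures by decomposing the estimate into three geometric pieces, matching the three terms on the right-hand side. First I would observe that the triangle inequality for $d_1^W$ lets me split
\[
d_1^W(X\#_t\mu,\,Y\#_s\nu)\le d_1^W(X\#_t\mu,\,Y\#_t\mu)+d_1^W(Y\#_t\mu,\,Y\#_t\nu)+d_1^W(Y\#_t\nu,\,Y\#_s\nu).
\]
I would then estimate each of the three summands using the geometric properties of the weighted geometric mean and Lemma \ref{L:2.2}.

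For the first term, the key fact is that the map $A\mapsto X\#_t A$ versus $A\mapsto Y\#_t A$ can be compared pointwise. Since $X\#_t A$ and $Y\#_t A$ both lie on geodesics emanating from $A$ (reparametrized), the nonpositive curvature of $\bP_m$ gives the geodesic convexity estimate $d(X\#_t A,\,Y\#_t A)\le(1-t)\,d(X,Y)$; this is the standard comparison of corresponding points on two geodesics with common endpoint $A$ at parameter $1-t$ from that endpoint. Integrating against $\mu$ (or using the obvious coupling that carries $X\#_t A\mapsto Y\#_t A$) yields $d_1^W(X\#_t\mu,\,Y\#_t\mu)\le(1-t)\,d(X,Y)$.

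For the second term I would use that $f_t(A)=Y\#_t A$ is Lipschitz in $A$ with constant $t$ for $t\in[0,1]$ — again a consequence of nonpositive curvature, since $d(Y\#_t A,\,Y\#_t B)\le t\,d(A,B)$ expresses contraction toward the common anchor $Y$. Lemma \ref{L:2.2} then converts this into $d_1^W(Y\#_t\mu,\,Y\#_t\nu)\le t\,d_1^W(\mu,\nu)$. For the third term, I would compare $Y\#_t\nu$ and $Y\#_s\nu$ by reparametrization in the geodesic parameter: the natural coupling sending $Y\#_t A\mapsto Y\#_s A$ gives $d(Y\#_t A,\,Y\#_s A)=|t-s|\,d(Y,A)$, so integrating against $\nu$ produces $d_1^W(Y\#_t\nu,\,Y\#_s\nu)\le|t-s|\int d(Y,A)\,d\nu(A)=|t-s|\,d_1^W(\delta_Y,\nu)$, where the last equality holds because $\delta_Y$ has all its mass at $Y$ so the only coupling is the product measure.

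The main obstacle is establishing the two curvature-based pointwise inequalities $d(X\#_t A,Y\#_t A)\le(1-t)d(X,Y)$ and $d(Y\#_t A,Y\#_t B)\le t\,d(A,B)$ cleanly; both follow from the fact that on a Cartan--Hadamard manifold the geodesic convexity (or CAT(0)) inequality controls distances between points at equal parameters along geodesics sharing an endpoint, but one must be careful to identify correctly which endpoint is fixed and which parameter value $t$ versus $1-t$ enters, since the convention in \eqref{w-g-mean} places $X\#_0 A=X$ and $X\#_1 A=A$. Once these pointwise bounds are in hand, the passage to $d_1^W$ via explicit couplings and Lemma \ref{L:2.2} is routine.
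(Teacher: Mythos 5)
Your proof is correct, but it reaches the Wasserstein-level estimate by a genuinely different route than the paper. The paper first combines the two geometric facts $d(A\#_tB,C\#_tD)\le(1-t)d(A,C)+td(B,D)$ and $d(A\#_tB,A\#_sB)=|t-s|d(A,B)$ into a single pointwise inequality $d(A\#_tB,C\#_sD)\le(1-t)d(A,C)+td(B,D)+|t-s|d(C,D)$, then proves the lemma for uniform finitely supported measures using the optimal-permutation formula for $d_1^W$ on $\cP_0(\bP_m)$, and finally extends to all of $\cP^1(\bP_m)$ by density together with the Lipschitz continuity of $\mu\mapsto X\#_t\mu$ from Lemma \ref{L:2.2}. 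You instead apply the triangle inequality directly in $d_1^W$, splitting into the three intermediate measures $Y\#_t\mu$ and $Y\#_t\nu$, and handle each piece with an explicit coupling (the push-forward of $\mu$ or $\nu$ under $A\mapsto(f(A),g(A))$ for the appropriate pair of maps) or with Lemma \ref{L:2.2}. The geometric inputs are identical --- both of your pointwise bounds are exactly the specializations of \eqref{E:3.1} and the equality following it, which the paper cites from Bhatia's book, so they need no further justification --- but your argument works directly with arbitrary measures in $\cP^1(\bP_m)$ and dispenses entirely with the approximation by finitely supported measures. That makes it somewhat cleaner and more self-contained; what the paper's route buys is that the single pointwise inequality and the permutation formula do all the work at once, at the cost of an extra density step. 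One small remark: your identity $d_1^W(\delta_Y,\nu)=\int d(Y,A)\,d\nu(A)$ is correct since the product measure is the only coupling of a Dirac mass with $\nu$, and it is worth stating explicitly as you did, since the third term of the claimed bound is phrased in terms of $d_1^W(\delta_Y,\nu)$ rather than the integral.
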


\begin{proof} It is known (see \cite{Bha}) that
\begin{align}
d(A\#_{t}B, C\#_{t}D)&\leq(1-t)d(A,C)+td(B,D), \label{E:3.1}\\
d(A\#_{t}B, A\#_{s}B)&=|t-s|d(A,B),\qquad t,s\in [0,1]. \nonumber
\end{align}
By the triangular inequality, for every $t,s\in[0,1]$,
\begin{align*}
d(A\#_t B, C\#_s D)&\leq d(A\#_{t}B,C\#_{t}D)+d(C\#_{t}D,C\#_{s}D)\\
&\leq(1-t)d(A,C)+td(B,D)+|t-s|d(C,D).
\end{align*}

For $\mu=\frac{1}{n}\sum_{j=1}^{n}\delta_{A_{j}}$,
$\nu=\frac{1}{n}\sum_{j=1}^{n}\delta_{B_{j}}$ in ${\mathcal
P}_{0}({\Bbb P}_{m})$,  it is known (see Introduction of \cite{Vi1}) that
$$
d^W_1(\mu,\nu)=\min_{\sigma\in S_{n}}\frac{1}{n}\sum_{j=1}^{n}
d(A_{j},B_{\sigma(j)}),
$$
where $S_n$ is the permutation group on $\{1,\ldots,n\}$. Therefore,
for every $t,s\in[0,1]$ we find a $\sigma\in S_n$ so that
\begin{align*}
d_1^W(X\#_t\mu,Y\#_s\nu)
&={1\over n}\sum_{j=1}^nd(X\#_tA_j,Y\#_sB_{\sigma(j)}) \\
&\le{1\over n}\sum_{j=1}^n\bigl[(1-t)d(X,Y)+td(A_j,B_{\sigma(j)})
+|t-s|d(Y,B_{\sigma(j)})\bigr] \\
&\le(1-t)d(X,Y)+td_1^W(\mu,\nu)+|t-s|d_1^W(\delta_Y,\nu).
\end{align*}
Hence the required inequality holds for all
$\mu,\nu\in\cP_0(\bP_m)$. Since $d(X\#_tA,X\#_tB)\le td(A,B)$ by
\eqref{E:3.1}, we see by Lemma \ref{L:2.2} that $\mu\mapsto
X\#_t\mu$ is Lipschitzian with Lipschitz constant $t$. Since
$\cP_0(\bP_m)$ is dense in $\cP^1(\bP_m)$, the result follows.
\end{proof}

\begin{theorem}\label{T:flow}
For each $X\in {\Bbb P}_{m},$ the map
$\Phi_{X}:{\Bbb R}\times {\mathcal P}^{1}({\Bbb P}_{m})\to {\mathcal
P}^{1}({\Bbb P}_{m})$ defined by
$$\Phi_{X}(t,\mu)=X\#_t\mu$$
is a continuous flow satisfying
\begin{equation}\label{Phi-compos}
\Phi_X(ts,\mu)=\Phi_X(s,\Phi_X(t,\mu)),\qquad t,s\in\bR.
\end{equation}
Moreover, for a fixed $\mu\in {\mathcal P}^{1}({\Bbb P}_{m})$, the map
$t\in\bR\mapsto X\#_t\mu\in\cP^1(\bP_m)$ is locally Lipschitz
continuous with respect to $d_1^W$, that is, for every $T>0$ there
exists a constant $C_T>0$ such that
$$
d_1^W(X\#_t\mu,X\#_s\mu)\le C_T|t-s|,\qquad t,s\in[-T,T].
$$
\end{theorem}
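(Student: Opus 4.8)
The plan is to prove the three assertions in turn: the multiplicative composition law \eqref{Phi-compos}, Lipschitz continuity in the time variable, and joint continuity of $\Phi_X$; the last will be assembled from the first two together with continuity in the measure variable at a fixed time.

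First I would record two facts coming from the isometry $\iota:Z\mapsto X^{-1/2}ZX^{-1/2}$, a congruence of $(\bP_m,d)$ carrying $X$ to $I$. Writing $\tilde A:=\iota(A)$, one has $\iota(X\#_tA)=\tilde A^{\,t}$, so $\iota(X\#_s(X\#_tA))=(\tilde A^{\,t})^s=\tilde A^{\,ts}=\iota(X\#_{ts}A)$; hence the point maps satisfy $f_s\circ f_t=f_{ts}$, and pushing $\mu$ forward gives
\[
\Phi_X(ts,\mu)=(f_s\circ f_t)_*\mu=(f_s)_*(f_t)_*\mu=\Phi_X(s,\Phi_X(t,\mu)),
\]
which is \eqref{Phi-compos}. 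The same change of variables shows that $r\mapsto X\#_rA$ is, for each fixed $A$, a constant-speed geodesic line, since $d(X\#_tA,X\#_sA)=d(\tilde A^{\,t},\tilde A^{\,s})=\|\log\tilde A^{\,s-t}\|_2=|t-s|\,d(X,A)$ for all $t,s\in\bR$; in particular $d(X\#_tA,X)=|t|\,d(X,A)$.

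For Lipschitz continuity in $t$, I would use the image of $\mu$ under $A\mapsto(X\#_tA,X\#_sA)$ as a coupling of $X\#_t\mu$ and $X\#_s\mu$, so that the geodesic identity yields
\[
d_1^W(X\#_t\mu,X\#_s\mu)\le\int_{\bP_m}d(X\#_tA,X\#_sA)\,d\mu(A)=|t-s|\int_{\bP_m}d(X,A)\,d\mu(A).
\]
Because $\int_{\bP_m}d(X,A)\,d\mu(A)=d_1^W(\delta_X,\mu)<\infty$ for $\mu\in\cP^1(\bP_m)$, this is actually a global Lipschitz bound (one may take $C_T=d_1^W(\delta_X,\mu)$ for every $T$), which a fortiori gives the stated local estimate; for $t,s\in[0,1]$ it also drops out of Lemma~\ref{L:2.3} with $\mu=\nu$ and $Y=X$.

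The main obstacle is continuity in the measure variable at a fixed time, because $f_t$ is \emph{not} globally Lipschitz when $|t|>1$: distinct geodesic rays issuing from $I$ diverge, so for $|t|>1$ there is no uniform control of $d(\tilde A^{\,t},\tilde B^{\,t})$ by $d(A,B)$, and Lemma~\ref{L:2.2} is unavailable. Instead I would invoke the standard characterization (see \cite{Vi1,St}) that $\nu_n\to\nu$ in $\cP^1(\bP_m)$ iff $\int\phi\,d\nu_n\to\int\phi\,d\nu$ for every continuous $\phi$ with $|\phi(B)|\le C(1+d(B,X))$. For such a $\phi$ the function $\phi\circ f_t$ is continuous and, by $d(X\#_tA,X)=|t|\,d(X,A)$, again of at most linear growth, whence $\int\phi\,d(X\#_t\nu_n)=\int(\phi\circ f_t)\,d\nu_n\to\int(\phi\circ f_t)\,d\nu=\int\phi\,d(X\#_t\nu)$; thus $\nu\mapsto X\#_t\nu$ is $d_1^W$-continuous for each fixed $t$. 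Finally, for $(t_n,\mu_n)\to(t_0,\mu_0)$ I would split
\[
d_1^W(X\#_{t_n}\mu_n,X\#_{t_0}\mu_0)\le|t_n-t_0|\,d_1^W(\delta_X,\mu_n)+d_1^W(X\#_{t_0}\mu_n,X\#_{t_0}\mu_0);
\]
the first term vanishes in the limit since $d_1^W(\delta_X,\mu_n)\to d_1^W(\delta_X,\mu_0)<\infty$ by the triangle inequality for $d_1^W$, and the second vanishes by the fixed-time continuity just established, giving joint continuity of $\Phi_X$.
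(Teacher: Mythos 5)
Your proposal is correct, and it reaches the two quantitative conclusions by a genuinely more direct route than the paper. For the Lipschitz estimate in $t$, you exploit the exact geodesic identity $d(X\#_tA,X\#_sA)=|t-s|\,d(X,A)$ (valid for all real $t,s$, since powers of $X^{-1/2}AX^{-1/2}$ commute) together with the diagonal coupling $(f_t,f_s)_*\mu$, which gives in one line the bound $d_1^W(X\#_t\mu,X\#_s\mu)\le |t-s|\,d_1^W(\delta_X,\mu)$ globally in $t,s\in\bR$. The paper instead first proves the case $t,s\in[0,1]$ via Lemma~\ref{L:2.3} (whose proof passes through finitely supported measures, the permutation formula for $d_1^W$, and a density argument), extends to $[-1,1]$ using the inversion symmetry $X\#_{-t}A=X(X\#_tA)^{-1}X$, and then rescales to $[-T,T]$ via the composition law; your argument shows, in addition, that the constant $C_T$ can be taken independent of $T$ (as it happens, the paper's $C_T=C_1'/T$ also unwinds to $d_1^W(\delta_X,\mu)$, though this is not remarked there). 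For continuity in the measure variable, you correctly flag that $f_t$ is not globally Lipschitz for $|t|>1$ (dilations from a point in nonpositive, nonzero curvature expand distances by an unbounded factor), so Lemma~\ref{L:2.2} is unavailable there, and you substitute the standard test-function characterization of $d_1^W$-convergence (weak convergence against continuous functions of linear growth), using $d(X\#_tA,X)=|t|\,d(X,A)$ to preserve the growth condition. This is more careful than the paper's one-line ``Continuity follows from Lemma~\ref{L:2.3}'', which as stated only covers $t,s\in[0,1]$ and leaves the $|t|>1$ case of joint continuity implicit. The composition law $f_s\circ f_t=f_{ts}$ is proved exactly as in the paper. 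No gaps.
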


\begin{proof} It is immediate to see that $X\#_s(X\#_tA)=X\#_{st}A$
for every $t,s\in\bR$, which yields
\begin{equation}\label{compos}
X\#_{st}\mu=X\#_s(X\#_t\mu),\qquad t,s\in\bR.
\end{equation}
This is nothing but \eqref{Phi-compos}. Continuity follows
from Lemma \ref{L:2.3}.

Let $\mu\in {\mathcal P}^{1}({\Bbb P}_{m})$ be fixed. Lemma
\ref{L:2.3} shows in particular that $d_1^W(X\#_t\mu,X\#_s\mu)\le
C_1|t-s|$ for every $t,s\in[0,1]$ with $C_1:=d_1^W(\delta_X,\mu)$.
When $t,s\in[0,1]$, since $X\#_{-t}A=X(X\#_tA)^{-1}X$ and
$X\#_{-s}A=X(X\#_sA)^{-1}X$, we have
$$
d(X\#_{-t}A,X\#_{-s}B)=d(X\#_tA,X\#_sB),\qquad A,B\in\bP_m,
$$
which immediately gives
$$
d_1^W(X\#_{-t}\mu,X\#_{-s}\mu)=d_1^W(X\#_t\mu,X\#_s\mu)\le C_1|t-s|.
$$
Moreover,
\begin{align*}
d_1^W(X\#_t\mu,X\#_{-s}\mu)&\le d_1^W(X\#_t\mu,\delta_X)+d_1^W(\delta_X,X\#_{-s}\mu) \\
&\le C_1t+C_1s=C_1|t-(-s)|.
\end{align*}
Hence the result holds for $T=1$.

For any $T>0$ and $t,s\in[-T,T]$ write $t=t'T$ and $s=s'T$ with
$t',s'\in[-1,1]$. Then by \eqref{compos} we can write
$X\#_t\mu=X\#_{t'}\mu'$ and $X\#_s\mu=X\#_{s'}\mu'$ with
$\mu':=X\#_T\mu$, By the above case with $\mu'$ in place of $\mu$ we
have
$$
d_1^W(X\#_t\mu,X\#_s\mu)\le C_1'|t'-s'|={C_1'\over T}\,|t-s|
$$
for some constant $C_1'$. Hence the result follows with
$C_T:=C_1'/T$.
\end{proof}

\begin{remark}\label{R:flow}
Theorem \ref{T:flow} says that $\Phi_X(\mu,t)=X\#_t\mu$ ($t\in\bR$) is a multiplicative
$\bR$-flow on $\cP^1(\bP_m)$. Modifying as $\Psi_X(\mu,t):=X\#_{e^{-t}}\mu$ ($t\ge0$),
we have an additive $\bR_+$-flow on $\cP^1(\bP_m)$ starting at $\mu$ ($t=0$) and attracted
to $\delta_X$ (as $t\to\infty$). This flow is also considered as a $\bP_m$-valued
Markov stochastic process $X_t(A):=X\#_{e^{-t}}A$ (with smooth sample paths) on the
probability space $(\bP_m,\mu)$.
\end{remark}

\section{Proof of Theorem \ref{T:main}}
In the following  we fix $X\in {\Bbb P}_{m}$ and $\mu\in {\mathcal
P}^1({\Bbb P}_{m}).$ For notational simplicity we write
$X_t\in\bP_n$ for $\beta(t)=G(X\#_t\mu)$ (with $X_0=X$), which is
uniquely characterized by the Karcher equation (see \eqref{Karcher})
$$
\int_{\bP_n}\log X_t^{-1/2}AX_t^{-1/2}\,d(X\#_t\mu)(A)=0,
$$
that is,
\begin{equation}\label{Karcher-flow}
\int_{\bP_n}\log X_t^{-1/2}(X\#_tA)X_t^{-1/2}\,d\mu(A)=0.
\end{equation} We set $\mu_{X}:=(g_X)_{*}\mu$, where
$g_X:{\Bbb P}_{m}\to {\Bbb P}_{m}$ is defined by
$g_X(A):=X^{-1/2}AX^{-1/2}$. (Note that $\mu_X$ is $M.\mu$ with
$M=X^{-1/2}$ in the notation in \cite{KLL}.) Moreover, let
$$\mu_X^t:=(\mu_X)^t,$$ where  the action of $t$-th power $\mu^t$ on
$\cP(\bP_m)$ is defined by the push-forward measure of $\mu$ by the
matrix $t$-th power  $A\mapsto A^t$ on $\bP_m$, that is,
$\mu^t=I\#_{t}\mu.$

\begin{lemma}\label{L-3.3}
For  $X\in\bP_m$, $\mu\in\cP^1(\bP_m)$ and $t\in\bR$. Then
$X\#_t\mu=(\mu_X^t)_{X^{-1}}$ and
\begin{equation}\label{beta-X-G}
\beta(t)=X^{1/2}G(\mu_X^t)X^{1/2},\qquad t\in\bR.
\end{equation}
\end{lemma}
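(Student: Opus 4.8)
The plan is to reduce the statement to two facts: the functoriality of push-forward, and the congruence-equivariance of the Cartan barycenter.

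First I would factor the map $f_t(A)=X\#_tA$ as a composition. Writing $g_X(A)=X^{-1/2}AX^{-1/2}$, the $t$-th power map $p_t(B)=B^t$, and $g_{X^{-1}}(C)=X^{1/2}CX^{1/2}$, the defining formula \eqref{w-g-mean} gives directly
$$
g_{X^{-1}}\bigl(p_t(g_X(A))\bigr)=X^{1/2}\bigl(X^{-1/2}AX^{-1/2}\bigr)^tX^{1/2}=X\#_tA,
$$
so that $f_t=g_{X^{-1}}\circ p_t\circ g_X$. Since push-forward is functorial, $(h\circ g)_*=h_*\circ g_*$, this yields
$$
X\#_t\mu=(f_t)_*\mu=(g_{X^{-1}})_*\,(p_t)_*\,(g_X)_*\mu=(g_{X^{-1}})_*\bigl(\mu_X^t\bigr)=(\mu_X^t)_{X^{-1}},
$$
which is the first asserted identity.

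For \eqref{beta-X-G}, the key ingredient is that congruence transformations $A\mapsto MAM^*$ with $M\in GL_m$ are isometries of the Riemannian trace metric, whence the Cartan barycenter is congruence-equivariant: $G(M.\nu)=M\,G(\nu)\,M^*$ for every $\nu\in\cP^1(\bP_m)$, where $M.\nu$ denotes the push-forward of $\nu$ by $A\mapsto MAM^*$. Because $X^{1/2}$ is Hermitian, the map $g_{X^{-1}}(C)=X^{1/2}CX^{1/2}$ is exactly the congruence by $M=X^{1/2}$, so $(\mu_X^t)_{X^{-1}}=X^{1/2}.(\mu_X^t)$ in this notation. Applying the equivariance then gives
$$
\beta(t)=G(X\#_t\mu)=G\bigl((\mu_X^t)_{X^{-1}}\bigr)=X^{1/2}\,G(\mu_X^t)\,X^{1/2},
$$
as required.

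The only substantive point is the congruence-equivariance of $G$, which I expect to be the crux but which is standard. It follows either from the invariance $d(MAM^*,MBM^*)=d(A,B)$ applied to the variational definition of $G(\mu)$, or alternatively directly from the Karcher equation \eqref{Karcher}: setting $U:=(MXM^*)^{-1/2}MX^{1/2}$, one checks that $UU^*=I$, so $U$ is unitary, and hence $\log\bigl((MXM^*)^{-1/2}MAM^*(MXM^*)^{-1/2}\bigr)=U\log\bigl(X^{-1/2}AX^{-1/2}\bigr)U^*$. Integrating, the Karcher equation characterizing $X=G(\nu)$ transforms into the one characterizing $MXM^*=G(M.\nu)$. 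All remaining steps are routine bookkeeping with push-forwards.
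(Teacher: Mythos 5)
Your proposal is correct and follows essentially the same route as the paper: factor $f_t=g_{X^{-1}}\circ h_t\circ g_X$, push forward functorially to get $X\#_t\mu=(\mu_X^t)_{X^{-1}}$, and then apply the congruence-invariance $G((g_X)_*\mu)=g_X(G(\mu))$ of the Cartan barycenter (which the paper simply cites to \cite{KLL}, whereas you additionally sketch a valid justification via the unitary $U=(MXM^*)^{-1/2}MX^{1/2}$ and the Karcher equation).
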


\begin{proof}
Since $f_t(A)=X^{1/2}(X^{-1/2}AX^{-1/2})^tX^{1/2}$, we have
$f_t=g_{X^{-1}}\circ h_t\circ g_X$, where $h_t(A):=A^t$. Therefore,
\begin{align*}
X\#_t\mu&=(g_{X^{-1}}\circ h_t\circ g_X)_*\mu \\
&=(g_{X^{-1}}\circ h_t)_*\mu_X
=(g_{X^{-1}})_*\mu_X^t=(\mu_X^t)_{X^{-1}}.
\end{align*}
Now, we recall (see \cite{KLL}) that the Cartan barycenter has the
invariance property $G((g_X)_*\mu)=g_X(G(\mu))$, i.e.,
$G(\mu_X)=X^{-1/2}G(\mu)X^{-1/2}$. Hence \eqref{beta-X-G} follows.
\end{proof}

To prove Theorem \ref{T:main}, we may and do assume that $X=I$ from
(\ref{beta-X-G}). In this case, $X_t=G(I\#_t\mu)=G(\mu_{I}^t)$ (with
$X_0=I$) and \eqref{X-int} is simply $\beta(0)=\int_{\bP_m}\log
X\,d\mu(X)$.

\begin{lemma}\label{L:3.1}
For any $T>0$ there exists a constant $K_T>0$ such that for every
$\alpha\in[-1,1]$ and every $t,s\in[-T,T]$,
$$
\|X_t^\alpha-X_s^\alpha\|\le K_T|s-t|.
$$
\end{lemma}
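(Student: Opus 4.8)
The plan is to reduce the estimate to two ingredients: the Lipschitz continuity of the trajectory $t\mapsto X_t$ in the Riemannian metric $d$, and the uniform operator-Lipschitz behaviour of the power maps $A\mapsto A^\alpha$ on a fixed compact subset of $\bP_m$.

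First I would establish that $t\mapsto X_t$ is Lipschitz for $d$ on $[-T,T]$. By Theorem \ref{T:flow} the curve $t\mapsto X\#_t\mu$ is locally Lipschitz for $d_1^W$, so there is $C_T>0$ with $d_1^W(X\#_t\mu,X\#_s\mu)\le C_T|t-s|$ for $t,s\in[-T,T]$. Composing this with the fundamental contraction property \eqref{contract} of the barycenter yields
\[
d(X_t,X_s)=d\bigl(G(X\#_t\mu),G(X\#_s\mu)\bigr)\le C_T|t-s|,\qquad t,s\in[-T,T].
\]
In particular $t\mapsto X_t$ is continuous, so its image $\mathcal K:=\{X_t:t\in[-T,T]\}$ is a compact subset of $\bP_m$, and there are constants $0<m_T\le M_T$ with $m_TI\le X_t\le M_TI$ for all $t\in[-T,T]$.

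Next I would pass from $d$ to the Hilbert--Schmidt distance on $\mathcal K$. Writing $C:=X_t^{-1/2}X_sX_t^{-1/2}$, so that $X_s-X_t=X_t^{1/2}(C-I)X_t^{1/2}$ and $d(X_t,X_s)=\|\log C\|_2$, the spectrum of $C$ lies in $[m_T/M_T,\,M_T/m_T]$, whence $\|\log C\|\le R_T:=\log(M_T/m_T)$. The key elementary fact I would use is that, for Hermitian matrices, a scalar function that is Lipschitz with constant $L$ on an interval containing the spectra is operator Lipschitz with the same constant in the Hilbert--Schmidt norm: expanding $f(A)-f(B)$ and $A-B$ in the eigenbases $(e_i)$ of $A$ and $(f_j)$ of $B$, their matrix entries are $(f(\lambda_i)-f(\mu_j))\langle e_i,f_j\rangle$ and $(\lambda_i-\mu_j)\langle e_i,f_j\rangle$, so the divided-difference factors are bounded by $L$ and $\|f(A)-f(B)\|_2\le L\|A-B\|_2$ follows from $\|M\|_2^2=\sum_{i,j}|\langle e_i,Mf_j\rangle|^2$. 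Applying this to $g(x)=e^x-1$ on $[-R_T,R_T]$ (Lipschitz constant $e^{R_T}$, with $g(0)=0$) and bounding the outer factors by $M_T$ gives
\[
\|X_t-X_s\|_2\le M_T e^{R_T}\,d(X_t,X_s)\le\Lambda_T|t-s|,\qquad \Lambda_T:=M_T e^{R_T}C_T .
\]

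Finally, for $\alpha\in[-1,1]$ the map $x\mapsto x^\alpha$ has Lipschitz constant on $[m_T,M_T]$ at most $N_T:=\max\{m_T^{-2},1\}$, since $|\alpha x^{\alpha-1}|\le x^{\alpha-1}\le m_T^{\alpha-1}$, and this bound is uniform in $\alpha$. Hence, invoking the same Hilbert--Schmidt estimate and then $\|\cdot\|\le\|\cdot\|_2$ on $\bH_m$,
\[
\|X_t^\alpha-X_s^\alpha\|\le\|X_t^\alpha-X_s^\alpha\|_2\le N_T\|X_t-X_s\|_2\le N_T\Lambda_T|t-s|,
\]
so that $K_T:=N_T\Lambda_T$ works. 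The one point requiring care is the uniformity in $\alpha\in[-1,1]$; casting the operator-Lipschitz step in the Hilbert--Schmidt norm sidesteps the delicate theory of operator Lipschitz functions for the operator norm and makes the uniform constant completely explicit through the divided-difference bound.
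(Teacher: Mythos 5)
Your proof is correct, but it takes a genuinely different route from the paper's in the second half. Both arguments begin identically: Lemma \ref{L:2.3} (via Theorem \ref{T:flow}) gives $d_1^W(I\#_t\mu,I\#_s\mu)\le C_T|t-s|$, and the fundamental contraction property \eqref{contract} turns this into $d(X_t,X_s)\le C_T|t-s|$. From there the paper applies the exponential metric increasing property $\|\log A-\log B\|_2\le d(A,B)$ to get $\|\log X_t-\log X_s\|\le C_T|t-s|$ and $\|\log X_t\|\le C_TT$, writes $X_t^\alpha=\exp(\alpha\log X_t)$, and estimates the difference of exponentials term by term in the power series using $\|A^k-B^k\|\le kR^{k-1}\|A-B\|$; this yields the clean explicit constant $K_T=C_Te^{C_TT}$ and needs no compactness argument. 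You instead stay with $X_t$ itself: compactness of the trajectory gives uniform spectral bounds $m_TI\le X_t\le M_TI$, and you then apply twice the elementary divided-difference fact that a scalar function Lipschitz on an interval containing the spectra is operator Lipschitz with the same constant in the Hilbert--Schmidt norm --- once with $g(x)=e^x-1$ to convert $d(X_t,X_s)$ into a bound on $\|X_t-X_s\|_2$, and once with $x\mapsto x^\alpha$ (whose derivative is bounded by $\max\{m_T^{-2},1\}$ uniformly in $\alpha\in[-1,1]$) to conclude. Your uniformity in $\alpha$ is handled correctly, and working in the Hilbert--Schmidt norm is exactly the right move to keep the operator-Lipschitz step elementary. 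What your version buys is independence from the EMI inequality and a reusable general principle; what it costs is the detour through compactness (your constants depend on $m_T,M_T$, which are less explicit than the paper's $C_Te^{C_TT}$, though they could be made explicit via $e^{-C_TT}I\le X_t\le e^{C_TT}I$). Note also that the paper's power-series step already proves, in effect, the special case of your operator-Lipschitz lemma for the exponential, so the two proofs are closer in spirit than they first appear.
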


\begin{proof}
For any $T>0$, by Lemma \ref{L:2.3} we have
$$
d_1^W(I\#_t\mu,I\#_s\mu)\le C_T|t-s|,\qquad t,s\in[-T,T].
$$
Applying this to the fundamental contraction property \eqref{contract}
and using the exponential metric increasing property (EMI)
(see \cite[Theorem 6.1.4]{Bha})
$$
\|\log A-\log B\|_{2}\leq d(A,B),\qquad A,B\in\bP_m,
$$
we have
$$
\|\log X_t-\log X_s\|\le C_T|t-s|,\qquad T,S\in[-T,T].
$$
In particular, $\|\log X_t\|\le C_TT$ for all $t\in[-T,T]$. For any
$\alpha\in[-1,1]$ and $t,s\in[-T,T]$ we find that
\begin{align*}
\|X_t^\alpha-X_s^\alpha\|&=\|\exp(\alpha\log X_t)-\exp(\alpha\log X_s)\| \\
&\le\sum_{k=1}^\infty{\|(\alpha\log X_t)^k-(\alpha\log X_s)^k\|\over k!} \\
&\le\sum_{k=1}^\infty{\|(\log X_t)^k-(\log X_s)^k\|\over k!} \\
&\le\sum_{k=1}^\infty{kC_T(C_TT)^{k-1}\over k!}\,|t-s|=K_T|t-s|,
\end{align*}
where $K_T:=C_Te^{C_TT}$.
\end{proof}

\begin{lemma}\label{L:3.2}
There exists a constant $C>0$ such that
$$
{1\over|t|}\|\log X_t^{-1/2}A^tX_t^{-1/2}\|\le C+\|\log A\|
$$
for every $t\in[-1,1]\setminus\{0\}$ and every $A\in\bP_n$.
\end{lemma}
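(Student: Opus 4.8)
The plan is to reduce the desired operator-norm estimate to a single elementary inequality,
$$
\|\log(C^{-1/2}BC^{-1/2})\|\le\|\log C\|+\|\log B\|,\qquad B,C\in\bP_m,
$$
and then to control $\|\log X_t\|$ linearly in $|t|$ using the ingredients already assembled in Section 2 and in the proof of Lemma \ref{L:3.1}; recall that under the reduction $X=I$ we have $X_0=I$ and $I\#_tA=A^t$, so the integrand in question is exactly $\log X_t^{-1/2}A^tX_t^{-1/2}$.

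First I would prove the displayed inequality. Setting $M:=C^{-1/2}BC^{-1/2}\in\bP_m$ and invoking \eqref{E:2.1} in the form $\|\log M\|=\log\max\{\|M\|,\|M^{-1}\|\}$, submultiplicativity of the operator norm gives $\|M\|\le\|C^{-1/2}\|^2\|B\|=\|C^{-1}\|\,\|B\|$ and $\|M^{-1}\|=\|C^{1/2}B^{-1}C^{1/2}\|\le\|C\|\,\|B^{-1}\|$. Since $\log\|B\|=\lambda_{\max}(\log B)\le\|\log B\|$ and $\log\|C^{-1}\|=-\lambda_{\min}(\log C)\le\|\log C\|$ (and symmetrically for the factors $\|C\|$ and $\|B^{-1}\|$), taking logarithms of both candidates for the maximum yields $\|\log M\|\le\|\log C\|+\|\log B\|$.

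Next I would apply this with $C=X_t$ and $B=A^t$. Because $\log A^t=t\log A$, we have $\|\log A^t\|=|t|\,\|\log A\|$, and therefore
$$
\|\log X_t^{-1/2}A^tX_t^{-1/2}\|\le\|\log X_t\|+|t|\,\|\log A\|.
$$
It remains to bound $\|\log X_t\|$ by a constant times $|t|$ on $[-1,1]$. Since $X_0=I$, i.e.\ $\log X_0=0$, I would write $\|\log X_t\|\le\|\log X_t\|_2=\|\log X_t-\log X_0\|_2$ and then chain the EMI property $\|\log X_t-\log X_0\|_2\le d(X_t,X_0)$, the fundamental contraction property \eqref{contract}, and the bound $d_1^W(I\#_t\mu,I\#_0\mu)\le C_1|t|$ furnished by Theorem \ref{T:flow} with $T=1$ (noting $I\#_0\mu=\delta_I$). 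This gives $\|\log X_t\|\le C_1|t|$, and dividing the previous display by $|t|\neq0$ and setting $C:=C_1$ completes the argument.

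The estimate is routine once the elementary logarithmic inequality is in hand; the only genuine point of care is that both sides of the asserted bound are in the operator norm, whereas EMI and the Wasserstein distance deliver Frobenius-norm estimates, so I must pass between the two via $\|\cdot\|\le\|\cdot\|_2$. I do not anticipate any substantive obstacle, since all the required linear-in-$|t|$ control of $\log X_t$ has already been established in the proofs of Theorem \ref{T:flow} and Lemma \ref{L:3.1}.
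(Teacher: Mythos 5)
Your proof is correct and follows essentially the same route as the paper's: both reduce the claim, via submultiplicativity of the operator norm and \eqref{E:2.1}, to showing that $\|\log X_t\|/|t|$ stays bounded on $[-1,1]\setminus\{0\}$. The only (minor) difference is how that boundedness is obtained --- you chain EMI, the contraction property \eqref{contract} and the Wasserstein Lipschitz bound to get $\|\log X_t\|\le C_1|t|$ directly (which is in fact the estimate inside the proof of Lemma \ref{L:3.1}), whereas the paper invokes the conclusion of Lemma \ref{L:3.1} to bound $\|X_t^{\pm1}-I\|/t$ and then uses $\log(1+x)\le x$; both are sound.
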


\begin{proof} From $X_{-t}=G(\mu^{-t})=G((\mu^{-1})^t)$, we may
assume that $t\in(0,1]$. Since
$$
\|X_t^{-1/2}A^tX_t^{-1/2}\|\le\|X_t^{-1}\|\,\|A\|^t,\qquad
\|X_t^{1/2}A^{-t}X_t^{1/2}\|\le\|X_t\|\|A^{-1}\|^t,
$$
we have
\begin{align*}
{1\over t}\log\|X_t^{-1/2}A^tX_t^{-1/2}\|
&\le{1\over t}\log\|X_t^{-1}\|+\log\|A\|, \\
{1\over t}\log\|X_t^{1/2}A^{-t}X_t^{1/2}\|
&\le{1\over t}\log\|X_t\|+\log\|A^{-1}\|.
\end{align*}
Let $M_t:=X_t-I$ and $M_t':=X_t^{-1}-I$. Then we have
\begin{align*}
{1\over t}\log\|X_t^{-1}\|
&={1\over t}\log\|I-X_t^{-1/2}M_tX_t^{-1/2}\| \\
&\le{1\over t}\log(1+\|X_t^{-1}\|\,\|M_t\|)\le\|X_t^{-1}\|\,{\|M_t\|\over t}, \\
{1\over t}\log\|X_t\|
&={1\over t}\log\|I-X_t^{1/2}M_t'X_t^{1/2}\| \\
&\le{1\over t}\log(1+\|X_t\|\,\|M_t'\|)\le\|X_t\|\,{\|M_t'\|\over t}.
\end{align*}
Note here that $\|X_t\|$, $\|X_t^{-1}\|$, $\|M_t\|/t$ and $\|M_t'\|/t$ are
all uniformly bounded for $t\in(0,1]$ by Lemma \ref{L:3.1}. Combining the above
estimates together with \eqref{E:2.1}, we find a constant $C>0$ such that
$$
{1\over t}\,\|\log X_t^{-1/2}A^tX_t^{-1/2}\|\le C+\|\log A\|,\qquad
t\in(0,1].
$$
\end{proof}

\vspace{4mm} \emph{Proof of Theorem \ref{T:main}.} For
$t\in[-1,1]\setminus\{0\}$ let $H_t:=X_t^{-1/2}-I$. We will prove
that $H_t/t$ converges as $t\to0$. Since $\|H_t/t\|$ is bounded by
Lemma \ref{L:3.1}, we may prove that a limit point of $H_t/t$ as
$t\to0$ is unique. Note that for each $A\in\bP_n$
\begin{align*}
X_t^{-1/2}A^tX_t^{-1/2} &=(I+H_t)\bigl(I+t\log A+o(t)\bigr)
(I+H_t) \\
&=I+2H_t+t\log A+o(t).
\end{align*}
Now, assume that $H_{t_k}/t_k\to L$ for a sequence
$t_k\in[-1,1]\setminus\{0\}$ with $t_k\to0$, so that
$$
{1\over t_k}\log X_{t_k}^{-1/2}A^{t_{k}}X_{t_k}^{-1/2}
=2{H_{t_k}\over t_k}
+\log A+{o(t_k)\over t_k}
\ \,\longrightarrow\ \,2L+\log A
$$
as $k\to\infty$. By \eqref{Karcher-flow} we have
\begin{equation}\label{Karcher-flow2}
\int_{\bP_n}{1\over t_k}\log
X_{t_k}^{-1/2}A^{t_{k}}X_{t_k}^{-1/2}\,d\mu(A)=0.
\end{equation}
Thanks to Lemma \ref{L:3.2}, the Lebesgue convergence theorem can be applied to
\eqref{Karcher-flow2} so that we obtain
\begin{equation*}
2L=-\int_{\bP_n}\log A\,d\mu(A).
\end{equation*}
Therefore, $L$ is a unique limit point of
$H_t/t$ as $t\to0$. This means that $t\mapsto Y_t:=X_t^{-1/2}$ is
differentiable at $t=0$ with the derivative $L$. Since
$X_t=Y_t^{-2}$, we find that $\beta(t)=X_t$ is differentiable at
$t=0$ and
\begin{align*}
\beta'(0)&=-2L=\int_{\bP_n}\log A\,d\mu(A),
\end{align*}
which is the desired conclusion (as we assumed that $X=I$). \qed

\begin{theorem}\label{T:main2}
Let $X\in {\Bbb P}_{m}, \mu\in {\mathcal P}^{1}({\Bbb P}_{m})$ and
let $\beta(t)=G(X\#_{t}\mu)$ be as in Theorem \ref{T:main}. Then
the following are equivalent:
\begin{itemize}
\item[(i)] $\beta'(0)=0$;
\item[(ii)] $X=G(\mu)$;
\item[(iii)] $X=G(X\#_{t}\mu)$ for all $t\in\bR$ $($equivalently for some $t\ne0)$;
\item[(iv)] $I=G(\mu_{X}^t)$ for all $t\in\bR$ $($equivalently for some
$t\ne0)$.
\end{itemize}
\end{theorem}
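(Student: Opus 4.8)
The plan is to show that each of (i)--(iv) is equivalent to the vanishing of the single matrix
$$
V:=\int_{\bP_m}\log X^{-1/2}AX^{-1/2}\,d\mu(A),
$$
the common quantity underlying both Theorem \ref{T:main} and the Karcher equation \eqref{Karcher}. First I would handle the pair (i), (ii): Theorem \ref{T:main} gives $\beta'(0)=X^{1/2}VX^{1/2}$, and since $X^{1/2}$ is invertible this shows (i) $\Leftrightarrow$ $V=0$; meanwhile \eqref{Karcher} states that $X=G(\mu)$ holds precisely when $V=0$, so (ii) $\Leftrightarrow$ $V=0$ as well.

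Next I would reduce (iii) to (iv). By Lemma \ref{L-3.3} we have $\beta(t)=X^{1/2}G(\mu_X^t)X^{1/2}$, so $X=\beta(t)$ is equivalent to $G(\mu_X^t)=X^{-1/2}XX^{-1/2}=I$; hence (iii) and (iv) are equivalent for each value of $t$, both in the ``for all $t$'' and in the ``for some $t\ne0$'' readings.

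The substance of the argument is to tie (iv) to $V=0$. Here I would write the Karcher equation characterizing $G(\mu_X^t)=I$, namely $\int_{\bP_m}\log A\,d\mu_X^t(A)=0$ (valid since $\mu_X^t\in\cP^1(\bP_m)$ by Lemma \ref{L:2.1}). Because $\mu_X^t=(h_t)_*\mu_X$ with $h_t(A)=A^t$ and $\log A^t=t\log A$, this integral equals
$$
t\int_{\bP_m}\log A\,d\mu_X(A)=t\int_{\bP_m}\log X^{-1/2}AX^{-1/2}\,d\mu(A)=tV,
$$
where the middle equality uses $\mu_X=(g_X)_*\mu$ with $g_X(A)=X^{-1/2}AX^{-1/2}$. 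Thus the condition $G(\mu_X^t)=I$ is exactly $tV=0$: for any fixed $t\ne0$ this reads $V=0$, and demanding it for all $t$ gives the same thing. This simultaneously yields (iv) $\Leftrightarrow$ $V=0$ and explains the parenthetical equivalence of ``for all $t$'' with ``for some $t\ne0$'' in (iii)--(iv).

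Assembling these, all four conditions are equivalent to $V=0$, which is the theorem. I do not expect a genuine obstacle: the differentiability needed for (i) is already furnished by Theorem \ref{T:main}, and the remaining work is the bookkeeping around $V$. The only point deserving care is the identity $\int_{\bP_m}\log A\,d\mu_X^t(A)=tV$, whose linearity in $t$ is what produces the ``some $t\ne0$'' equivalence; this rests solely on pushing $\mu_X$ through the power map $h_t$ and on $\log A^t=t\log A$ for $A\in\bP_m$.
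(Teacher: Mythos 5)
Your proposal is correct and follows essentially the same route as the paper: (i)$\iff$(ii) via the formula \eqref{X-int} together with the Karcher equation, and (ii)--(iv) via the linearity in $t$ of the integral $\int_{\bP_m}\log X^{-1/2}(X\#_tA)X^{-1/2}\,d\mu(A)=t\int_{\bP_m}\log A\,d\mu_X(A)=\int_{\bP_m}\log A\,d\mu_X^t(A)$, which is exactly the chain of identities the paper computes. The only cosmetic difference is that you route (iii)$\iff$(iv) through Lemma \ref{L-3.3} rather than reading both off the same Karcher integrals, which changes nothing of substance.
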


\begin{proof}
For every $\mu\in\cP^1(\bP_m)$, the Karcher equation \eqref{Karcher} is equivalent
to $\beta'(0)=0$ thanks to \eqref{X-int}. Hence we have (i)$\iff$(ii). Moreover,
we note that
\begin{align*}
\int_{\bP_m}\log X^{-1/2}AX^{-1/2}\,d(X\#_t\mu)(A)
&=\int_{\bP_m}\log X^{-1/2}(X\#_tA)X^{-1/2}\,d\mu(A)\\
&=t\int_{\bP_m}\log X^{-1/2}AX^{-1/2}\,d\mu(A) \\
&=t\int_{\bM_m}\log A\,d\mu_X(A)\\
&=\int_{\bM_m}\log A\,d\mu_X^t(A).
\end{align*}
Therefore, it immediately follows that (ii)--(iv) are equivalent.
\end{proof}

\begin{corollary} Let $\mu\in {\mathcal P}^{1}({\Bbb P}_{m})$ and
let $\beta(t):=G(G(\mu)\#_{t}\mu).$ Then $\beta$ is differentiable
at $t=0$ with $\beta'(0)=0.$
\end{corollary}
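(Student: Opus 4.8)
The plan is to recognize this corollary as the special case $X=G(\mu)$ of Theorem \ref{T:main}, combined with the Karcher characterization \eqref{Karcher} of the Cartan barycenter. First I would note that the trajectory here, $\beta(t)=G(G(\mu)\#_t\mu)$, is exactly $\beta_X^\mu(t)$ from \eqref{beta} with the base point taken to be $X=G(\mu)$ itself. Since $G(\mu)\in\bP_m$ and $\mu\in\cP^1(\bP_m)$, the hypotheses of Theorem \ref{T:main} are met, so that theorem applies verbatim and already supplies both the differentiability of $\beta$ at $t=0$ and the explicit value
$$
\beta'(0)=G(\mu)^{1/2}\biggl(\int_{\bP_m}\log G(\mu)^{-1/2}A\,G(\mu)^{-1/2}\,d\mu(A)\biggr)G(\mu)^{1/2}.
$$

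Second, I would observe that the matrix integral appearing in this formula is precisely the left-hand side of the Karcher equation \eqref{Karcher} evaluated at $X=G(\mu)$. Because $G(\mu)$ is by definition the Cartan barycenter of $\mu$, it satisfies \eqref{Karcher}, so this integral is the zero matrix. Substituting back yields $\beta'(0)=G(\mu)^{1/2}\cdot 0\cdot G(\mu)^{1/2}=0$, which is the assertion. Alternatively, and even more economically, the corollary is just the implication (ii)$\Rightarrow$(i) of Theorem \ref{T:main2} applied with base point $X=G(\mu)$: condition (ii) holds trivially, whence (i), namely $\beta'(0)=0$.

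There is essentially no obstacle to overcome: all of the analytic content—the local Lipschitz continuity and the differentiability together with the integral formula for $\beta'(0)$—is already packaged inside Theorem \ref{T:main}, and the vanishing of the derivative is nothing more than a restatement of the defining Karcher equation for $G(\mu)$. The only point that must be checked, and it is immediate, is that the choice $X=G(\mu)$ is admissible, i.e. that $G(\mu)\in\bP_m$, so that Theorem \ref{T:main} (or equivalently Theorem \ref{T:main2}) can be invoked.
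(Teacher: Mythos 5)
Your proposal is correct and matches the paper's intended argument: the corollary is stated without proof precisely because it is the special case $X=G(\mu)$ of Theorem \ref{T:main}, where the integral in \eqref{X-int} vanishes by the Karcher equation \eqref{Karcher} (equivalently, the implication (ii)$\Rightarrow$(i) of Theorem \ref{T:main2}). Nothing further is needed.
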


When $\mu\in\cP^2(\bP_m)$, i.e., $\mu$ has finite second moment, the
equivalence of (ii) and (iii) of Theorem \ref{T:main2} was shown in
\cite[Theorem 3.1]{KLL}. The Karcher equation or equivalently $\beta'(0)=0$
has played a crucial role in the Riemannian geometric approach of
multivariate geometric means as in \cite{Mo,LP,LL2}, which has been
extended to the Cartan barycenter in \cite{KL,KLL,HL}. For a finitely
supported measure $\mu=\frac{1}{n}\sum_{j=1}^{n}w_j\delta_{A_{j}}$,
the fixed point Cartan mean equation
$X=G(X\#_{t}\mu)=G_w(X\#_{t}A_{1},\dots,X\#_{t}A_{n})$ appeared in
\cite{LP} and \cite{LL2}. The formula \eqref{X-int} is evidently new
and deserves to receive its attention due to its relation to the
Karcher equation.

\section{Final remarks and open problems}

(1)\enspace
In the present paper, we first prove the differentiability of the Cartan
barycentric trajectory $\beta(t)$ at $t=0$ and then use it to prove the
Lie-Trotter formula for $\lim_{t\to0}G(\mu^t)^{1/t}$. One can also proceed
in the opposite way. Indeed, we have a direct proof of the Lie-Trotter formula
in Corollary \ref{C:LT}, which in turn shows Theorem \ref{T:main} immediately.
It is worth noting that the Lebesgue convergence theorem is essential in our
direct proof of \eqref{Lie-Trotter2} for $\mu\in\cP^1(\bP_m)$, as it is so in
the proof of Theorem \ref{T:main} in Section 3.

(2)\enspace We are also interested in the extension of Theorem
\ref{T:main} to any $t\in\bR$, that is, in the differentiability
problem of $\beta(t)$ and, in this case, in what is the form of
derivative $\beta'(t)$. It does not seem possible to generalize the
above proof for $\beta'(0)$ to the case for $\beta'(t)$ at $t\ne0$.
But, under a stronger assumption that
$\int_{\bP_n}(\|A\|+\|A^{-1}\|)^{2\alpha}\,d\mu(A)<\infty$ with some
$\alpha>0$, we can prove the differentiability of $\beta(t)$ for
$t\in[-\alpha,\alpha]$, though the expression of $\beta'(t)$ is much
complicated.

(3)\enspace
Given $\mu\in\cP^1(\bP_m)$, it is well-known (see \cite{LL3}) that the (Euclidean) gradient
of the function $\psi(X):={1\over2}\int_{\bP_m}\bigl[d^2(X,A)-d^2(Y,A)\bigr]\,d\mu(A)$ at
$X\in\bP_m$ is
$$
\nabla\psi(X)=X^{-1/2}\biggl(\int_{\bP_n}\log X^{1/2}A^{-1}X^{1/2}\,d\mu(A)\biggr)X^{-1/2},
$$
and the Riemannian gradient of $\psi$ at $X$ is
$\nabla^{\mathrm{Rie}}\psi(X)=X\nabla\psi(X)X$. Hence the Riemannian gradient flow on
$\bP_m$ is introduced as the solution of the Cauchy problem
$$
{dX_t\over dt}=-\nabla^{\mathrm{Rie}}\psi(X_t)
=X_t^{1/2}\biggl(\int_{\bP_m}\log X_t^{-1/2}AX_t^{-1/2}\,d\mu(A)\biggr)X_t^{1/2}
$$
with initial value $X_0=X\in\bP_m$. In \cite{LP2}, Lim and P\'alfia have discussed this
gradient flow (called an ODE flow there) and obtained its description by using the
\emph{resolvent operator} defined by
$$
J_\lambda^\mu(X):=G\biggl({\lambda\over\lambda+1}\mu+{1\over\lambda+1}\delta_X\biggr)
$$
for $\lambda\ge0$ and $X\in\bP_m$. Note that $J_\lambda^\mu(X)$ is
the Cartan barycentric trajectory of the arithmetic mean flow
$\lambda\ge0\mapsto(\lambda\mu+\delta_X)/(\lambda+1)$ on
$\cP^1(\bP_m)$. When $t=\lambda/(\lambda+1)$, from the
arithmetic-geometric mean inequality $X\#_tA\le(X+\lambda
A)/(\lambda+1)$, we can see that
$X\#_t\mu\le(\lambda\mu+\delta_X)/(\lambda+1)$ in the partial order
on $\cP(\bP_m)$ considered in \cite{KLL,HL}. By the monotonicity
property of the Cartan barycenter (see \cite[Theorem 3.2]{HL}) we
have $\beta_X^\mu(t)\le J_\lambda^\mu(X)$ for
$t=\lambda/(\lambda+1)$. It might be interesting to find more
relations of the trajectory $\beta(t)=\beta_X^\mu(t)$ with
$J_\lambda^\mu(X)$ and the gradient flow.

\subsection*{Acknowledgments}
 The work of
F.~Hiai was supported by Grant-in-Aid for Scientific Research
(C)17K05266. The work of Y.~Lim was supported by the National
Research Foundation of Korea (NRF) grant funded by the Korea
government(MEST) No.2015R1A3A2031159 and 2016R1A5A1008055.


\begin{thebibliography}{99}

\bibitem{AKL}
E. Ahn, S. Kim and Y. Lim, An extended Lie-Trotter formula and its applications,
Linear Algebra Appl. \textbf{427} (2007), 190--196.



\bibitem{Bha}
R. Bhatia,
Positive definite matrices,
Princeton Series in Applied Mathematics, Princeton University Press, Princeton, NJ, 2007.

\bibitem{BG}
R. Bhatia and P. Grover, Norm inequalities related to the
matrix geometric mean, Linear Algebra Appl. \textbf{437} (2012), 726--733.

\bibitem{BH}
R. Bhatia and J. Holbrook, Riemannian geometry and matrix geometric
means, Linear Algebra Appl. \textbf{413} (2006), 594--618.

\bibitem{BK}
R. Bhatia and R. Karandikar, Monotonicity of the matrix geometric
mean, Math. Ann. \textbf{353} (2012), 1453--1467.


\bibitem{FFS}
J. I. Fujii, M. Fujii, Y. Seo,
The Golden-Thompson-Segal type inequalities related to the weighted geometric mean due to
Lawson-Lim, J. Math. Inequal. \textbf{3} (2009), 511--518.

\bibitem{Fu}
T. Furuta, Convergence of logarithmic trace inequalities via generalized
Lie-Trotter formulae, Linear Algebra Appl. \textbf{396} (2005), 353--372.

\bibitem{Hi}
F. Hiai, Log-majorizations and norm inequalities for
exponential operators, in {\it Linear Operators},
J. Janas, F. H. Szafraniec and J. Zem\'anek (eds.),
Banach Center Publications, Vol. 38, 1997, pp. 119--181.

\bibitem{HL}
F. Hiai and Y. Lim, Log-majorization and Lie-Trotter formula for the
Cartan barycenter on probability measure spaces, J. Math. Anal.
Appl., \textbf{453} (2017), 195-211.

\bibitem{HiP0}
F. Hiai, D. Petz, The Golden-Thompson trace inequality is complemented,
Linear Algebra Appl. \textbf{181} (1993), 153--185.

\bibitem{HiP}
F. Hiai and D. Petz, Riemannian metrics on positive definite
matrices related to means II, Linear Algebra Appl. \textbf{436}
(2012), 2117--2136.

\bibitem{KL}
S. Kim and H. Lee, The power mean and the least squares mean of
probability measures on the space of positive definite matrices,
Linear Algebra Appl. \textbf{465} (2015), 325--346.

\bibitem{KLL}
S. Kim, H. Lee and Y. Lim, An order inequality characterizing
invariant barycenters on symmetric cones,
J. Math. Anal. Appl. \textbf{442} (2016), 1--16.

\bibitem{LL1}
J. Lawson and Y. Lim, Monotonic properties of the least squares
mean, Math. Ann. \textbf{351} (2011), 267--279.

\bibitem{LL3}
J. Lawson and Y. Lim, The least squares mean of positive Hilbert--Schmidt operators,
J. Math. Anal. Appl. \textbf{403} (2013), 365--375.

\bibitem{LL2}
J. Lawson and Y. Lim, Karcher means and Karcher equations of
positive definite operators, Trans. Amer. Math. Soc. Series B
\textbf{1} (2014), 1--22.

\bibitem{LL5}
J. Lawson and Y. Lim, Contractive barycentric maps, to appear in J.
Operator Theory.


\bibitem{LP}
Y. Lim and M. P\'{a}lfia, Matrix power means and the Karcher mean,
J. Funct. Anal. \textbf{262} (2012), 1498--1514.

\bibitem{LP2}
Y. Lim and M. P\'{a}lfia, Existence and uniquness of the $L^1$-Karcher mean,
preprint (2017). arXiv:1703.04292 [math.FA]

\bibitem{Mo}
M.~Moakher, A differential geometric approach to the geometric mean of symmetric
positive-definite matrices, SIAM J. Matrix Anal. Appl. {\bf 26} (2005),
735--747.

\bibitem{St}
K.-T. Sturm, Probability measures on metric spaces of nonpositive
curvature. Heat kernels and analysis on manifolds, graphs, and
metric spaces (Paris, 2002), 357-390, Contemp. Math., 338, Amer.
Math. Soc., Providence, RI, 2003.

\bibitem{Vi1}
C. Villani,  Topics in Optimal Transportation, Graduate Studies in
Mathematics, Vol. 58, American Mathematical Society, Providence, RI,
2003.

\bibitem{Ya1}
T. Yamazaki, The Riemannian mean and matrix inequalities related to
the Ando-Hiai inequality and chaotic order, Operators and Matrices
\textbf{6} (2012), 577-588.

\end{thebibliography}
\end{document}